\documentclass[a4paper,11pt,pdf]{amsart}
\usepackage{enumerate, amsmath, amsfonts, amssymb, amsthm, thmtools, wasysym, graphics, graphicx, xcolor, frcursive,comment,bbm}
\usepackage[colorlinks=true,citecolor=cyan,backref=page]{hyperref}

\usepackage{etex}
\usepackage{lscape}
\usepackage{tikz-cd}

\makeatletter
\newcommand{\thickhline}{%
    \noalign {\ifnum 0=`}\fi \hrule height 1pt
    \futurelet \reserved@a \@xhline
}

\definecolor{darkblue}{rgb}{0.0,0,0.7} 
 
\definecolor{darkred}{rgb}{0.7,0,0} 
\usepackage{hyperref}
\usepackage[all]{xy}
\usepackage[T1]{fontenc}
\usepackage{adjustbox}

\usepackage{vmargin}            
\setmarginsrb{3cm}{2.5cm}{3cm}{2.5cm}{0cm}{0.6cm}{0cm}{0cm}

\usepackage{caption,lipsum}
\captionsetup{width=0.93\textwidth}

\usepackage{graphicx}                  
\usepackage{pstricks,pst-plot,pst-text,pst-tree,pst-eps,pst-fill,pst-node,pst-math}
\usepackage{setspace}
\usepackage{multicol}

\usepackage{etex}

\newtheorem{theorem}{Theorem}[section]
\newtheorem{prop}[theorem]{Proposition}
\newtheorem{lemma}[theorem]{Lemma}

\theoremstyle{definition}

\newtheorem{rmq}[theorem]{Remark}
\newtheorem{exple}[theorem]{Example}

\numberwithin{equation}{section}

\title[Elements of minimal length and Bruhat order on fixed point cosets]{Elements of minimal length and Bruhat order on fixed point cosets of Coxeter groups}

\author{Nathan Chapelier-Laget}
\address{University of Sydney, School of Mathematics and Statistics}

\author{Thomas Gobet}
\address{Institut Denis Poisson, CNRS UMR 7350, Faculté des Sciences et Techniques, Université de Tours, Parc de Grandmont, 
37200 TOURS, France}

\begin{document}
\maketitle

\thispagestyle{empty}

\begin{abstract}
We study the restriction of the strong Bruhat order on an arbitrary Coxeter group $W$ to cosets $x W_L^\theta$, where $x$ is an element of $W$ and $W_L^\theta$ the subgroup of fixed points of an automorphism $\theta$ of order at most two of a standard parabolic subgroup $W_L$ of $W$. When $\theta\neq\mathrm{id}$, there is in general more than one element of minimal length in a given coset, and we explain how to relate elements of minimal length. We also show that elements of minimal length in cosets are exactly those elements which are minimal for the restriction of the Bruhat order.  
\end{abstract}

\section{Introduction}

When studying Coxeter groups, one often encounters subgroups which themselves admit a structure of Coxeter group. Although there do not seem to exist a general theory of "Coxeter subgroups" of Coxeter groups, at least several important families of subgroups are known to admit canonical structures of Coxeter groups: this includes (standard) parabolic subgroups, or more generally reflection subgroups~\cite{Deodhar, Dyer_subgroups}. Another family is given by subgroups obtained as fixed points of automorphisms of the Coxeter-Dynkin diagram. 

Let $(W,S)$ be a Coxeter system. In the most basic of the aforementioned situations, one considers a subset $J\subseteq S$, and defines $W_J$ as the subgroup of $W$ generated by the elements of $J$. The pair $(W_J, J)$ is again a Coxeter system, with Coxeter-Dynkin diagram obtained from the diagram of $W$ by removing the vertices corresponding to generators in $S\backslash J$. In this situation, every coset $x W_J$ admits two basic properties, namely \begin{enumerate} \item \label{p1} There is a unique element $x^J\in x W_J$ which has minimal length among all elements in $x W_J$,
\item \label{p2} For all $y\in x W_J$, one has $x^J \leq y$, where $\leq$ denotes the strong Bruhat order on $W$.
\end{enumerate}

In fact, for $y\in x W_J$, one has the stronger statement that $x^J$ is below $y$ for the right weak order; nevertheless, there are generalizations in which this is too much to expect. For instance, Dyer extended these properties to the far more general setting of reflection subgroups of Coxeter groups~\cite[Theorem 1.4]{Dyer_Bruhat}, that is, to the case where $W_J$ is replaced by any subgroup $W'$ of $W$ generated by a subset of the set $T=\bigcup_{w\in W} w S w^{-1}$ of \textit{reflections} of $W$. In this setting, property~\ref{p2} is only valid for the strong Bruhat order, not for the right weak order in general.  

\medskip

The purpose of this article is to study the analogue of Properties~\ref{p1} and~\ref{p2} above for another class of Coxeter subgroups of Coxeter groups, given by fixed points subgroups of automorphisms squaring to the identity of the Coxeter-Dynkin diagram of (a standard parabolic subgroup of) $W$. 

To be more precise, let $(W,S)$ be a Coxeter system and $L\subseteq S$ be a subset. Let $W_L$ be the corresponding standard parabolic subgroup of $W$. Let $\theta$ be an automorphism of $W_L$ such that $\theta(L)=L$. Then $$W_L^\theta:=\{w\in W_L \ \vert \ \theta(w)=w\}$$ admits a structure of Coxeter group; this was observed by Steinberg for finite Weyl groups~\cite[Section 11]{Steinberg}, and later generalized to arbitrary Coxeter systems independently by Hée~\cite{Hee_1} and Mühlherr~\cite{muhlh} (see also Lusztig~\cite[Appendix]{Lus_book}). The simple system is obtained as follows. First partition $L$ into orbits $(J_i)_{i\in I}$ under the action of $\theta$. Then, whenever $J_i$ is such that the standard parabolic subgroup $W_{J_i}$ is finite, consider its longest element. The simple system $S_L^\theta$ consists of all these elements. In what follows, we will restrict ourselves to the case where $\theta^2=\mathrm{id}$, in which case every $W_{J_i}$ is either of type $A_1$ or dihedral.  

Note that, when $W$ is irreducible and $L=S$, there may not be a lot of nontrivial automorphisms $\theta$ of the Coxeter-Dynkin diagram, but for $L\neq S$ the subgroup $W_L$ may not be irreducible, yielding many possible automorphisms permuting irreducible components that are isomorphic as Coxeter groups. Such a situation arose in work of Chaput, Fresse and the second author~\cite[Section 3]{CFG}, where a study of the analogues of Properties~\ref{p1} and~\ref{p2} of cosets $x W_L^\theta$ for subsets $L$ of a certain form was an important step in the understanding of a partial order defined on the quotient $W/W_L^\theta$, which in type $A$ describes a "Bruhat-like order" given by inclusion of certain nilpotent orbit closures.

Unfortunately, unlike for the case of standard parabolic subgroups (or more generally reflection subgroups), there is \textit{not} a unique element of minimal length in a given coset in general. The main results addressing the analogues of Properties~\ref{p1} and~\ref{p2} may be summarized as follows: 

\begin{theorem}[Relation between elements of minimal length in a given coset]\label{thm_min}
Let $u,v\in W$, $y\in W_L^\theta$ such that $v=uy$ and $u,v$ are both of minimal length in $u W_L^\theta= v W_L^\theta$. Let $y_1 y_2 \cdots y_k$ be an $S_L^\theta$-reduced expression of $y$ in $W_L^\theta$. Then we have $$\ell(u)=\ell(uy_1)=\ell(uy_1 y_2)=\cdots =\ell(u y_1 \cdots y_{k-1})=\ell(v).$$
In other words, for all $i=1, \dots, k-1$, we have that $u y_1 \cdots y_i$ is of minimal length in $uW_L^\theta=v W_L^\theta$.  
\end{theorem}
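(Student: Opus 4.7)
The plan is to induct on $k = \ell_{S_L^\theta}(y)$, the $S_L^\theta$-length of $y$. The cases $k \in \{0,1\}$ are vacuous. For the inductive step, it suffices to show that $uy_1$ is itself of minimal length in $uW_L^\theta$: applying the inductive hypothesis to $(uy_1, v)$ with the $S_L^\theta$-reduced expression $y_2\cdots y_k$ of $(uy_1)^{-1}v \in W_L^\theta$ then yields $\ell(uy_1) = \ell(uy_1y_2) = \cdots = \ell(v)$, which combined with $\ell(u) = \ell(uy_1)$ gives the whole chain.

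Write $w_i := uy_1\cdots y_i$. Since $u$ is minimal in $uW_L^\theta$ and $w_i$ lies in that coset, $\ell(w_i) \geq \ell(u)$. Symmetrically, because each $y_j$ is an involution, $w_i = vy_ky_{k-1}\cdots y_{i+1}$, so minimality of $v$ gives $\ell(w_i) \geq \ell(v) = \ell(u)$. Thus the length sequence $\ell(w_0), \ldots, \ell(w_k)$ is bounded below by $\ell(u)$ and attains it at both endpoints.

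The heart of the argument is upgrading $\ell(uy_1) \geq \ell(u)$ to equality. Let $J_1 \subseteq L$ be the $\theta$-orbit giving rise to $y_1$, so that $y_1$ is the longest element of the finite standard parabolic $W_{J_1}$, and write the parabolic decomposition $u = u^{J_1}z$ with $u^{J_1}$ the minimal representative of $uW_{J_1}$ and $z \in W_{J_1}$. The identity $\ell(zy_1) = \ell(y_1) - \ell(z)$ in the finite group $W_{J_1}$, together with length additivity of the parabolic decomposition, yields
\[
\ell(uy_1) = \ell(u) + \ell(y_1) - 2\ell(z),
\]
and the inequality $\ell(y_1) - 2\ell(z) \geq 0$ is already forced by minimality of $u$. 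To obtain the required equality $2\ell(z) = \ell(y_1)$, I would propagate the minimality of $v$ back through the $S_L^\theta$-reduced word $y_1\cdots y_k$, using additivity of the ambient $W$-length on $S_L^\theta$-reduced expressions (a fundamental property of fixed-point Coxeter subgroups due to Hée~\cite{Hee_1} and Mühlherr~\cite{muhlh}): any strict excess $\ell(y_1) - 2\ell(z) > 0$ at the first step must propagate to a strict inequality $\ell(v) > \ell(u)$, contradicting minimality of $v$.

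The main obstacle is carrying out this propagation rigorously. Since the generators $y_2,\ldots,y_k$ are attached to $\theta$-orbits generally distinct from $J_1$, the $W_{J_1}$-parabolic decomposition of $w_i$ evolves in a nontrivial way as $i$ increases, and the transfer of information from $v$ back to the initial step requires careful bookkeeping of how the $W_{J_1}$-part $z$ interacts with the successive factors. A case analysis on the nature of $y_1$ is likely necessary, distinguishing the subcases $|J_1|=1$ (so $y_1$ is a $\theta$-fixed simple reflection and $\ell(y_1)=1$) and $|J_1|=2$ (so $y_1$ is a dihedral longest element, of even or odd length depending on $m(a,b)$); in the ``odd-length'' subcases, where $2\ell(z) = \ell(y_1)$ has no integer solution, one must show that the hypothesis of simultaneous minimality of $u$ and $v$ cannot be satisfied at all, thereby making those cases vacuous.
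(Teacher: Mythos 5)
Your setup is sound as far as it goes: reducing to the single claim $\ell(uy_1)=\ell(u)$ and then inducting on $k$ is legitimate, the computation $\ell(uy_1)=\ell(u)+\ell(y_1)-2\ell(z)$ via the parabolic decomposition $u=u^{J_1}z$ is correct, and minimality of $u$ does force $\ell(y_1)-2\ell(z)\geq 0$. But the entire difficulty of the theorem is concentrated in the step you defer, namely showing that a strict excess $\ell(uy_1)>\ell(u)$ propagates to $\ell(v)>\ell(u)$. What you need is exactly that the length is non-decreasing along the whole chain $u,\,uy_1,\,uy_1y_2,\dots,v$; this is the paper's Proposition~\ref{prop_esc}, and it is the technical heart of the argument. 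Your proposed mechanism (tracking the $W_{J_1}$-component through the word) is unlikely to work: the later letters $y_2,\dots,y_k$ are longest elements of \emph{different} standard parabolics and can completely rearrange the $W_{J_1}$-part of $w_i$, so there is no visible invariant recording the ``excess at step one'' that survives to step $k$. You have correctly identified the obstacle but not overcome it, so this is a genuine gap rather than a complete proof.

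For comparison, the paper does not propagate anything forward from step one; it shows directly that a length \emph{decrease} at any step $i\to i+1$ contradicts minimality of $u$ (not of $v$), using inversion sets rather than parabolic bookkeeping. If $x_{i+1}$ is a reflection, a decrease would place $yx_{i+1}y^{-1}\in W_L^\theta$ inside $N(u)$ via the cocycle identity $N(uy)=N(y)\Delta(y^{-1}N(u)y)$ together with $x_{i+1}\notin N(y)$ (a consequence of M\"uhlherr's length additivity, Proposition~\ref{prop_muhlh}). If $x_{i+1}=w_{0,I}$ with $I=\{s,t\}$ of type $I_2(2m)$, a decrease produces two commuting reflections $s$ and $q=sx_{i+1}$ in $N(uy)\setminus N(y)$, hence $\widetilde{s}=ysy^{-1}$ and $\widetilde{q}=yqy^{-1}$ both in $N(u)$; Lemma~\ref{lem_ref_commute} then gives $u\widetilde{s}\widetilde{q}<u$ with $\widetilde{s}\widetilde{q}=yx_{i+1}y^{-1}\in W_L^\theta$, contradicting $u\in\mathrm{Min}(u)$. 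None of this machinery (or a substitute for it) appears in your proposal, so the assertion that the excess ``must propagate'' remains unproven. Your remark about the odd-length subcases is also slightly off target: one does not show that the hypotheses of the theorem are unsatisfiable, but rather that when $u\neq v$ are both minimal no letter $y_i$ can be a reflection --- which is again precisely the reflection case of Proposition~\ref{prop_esc}.
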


The main ingredient for proving Theorem~\ref{thm_min} is the following proposition, which will also be useful for the proof of Theorem~\ref{main_mm} below addressing the analogue of Property~\ref{p2}: 

\begin{prop}\label{prop_esc}
Let $u, w\in W$ such that $u$ is of minimal length in $w W_L^\theta$. Let $z\in W_L^\theta$ such that $w=uz$ and let $x_1 x_2 \cdots x_k$ be an $S_L^\theta$-reduced expression of $z$. For all $i=0, \dots, k-1$, exactly one of the following two situations occurs:
\begin{itemize}
	\item either $\ell(u x_1 \cdots x_i)=\ell(u x_1 \cdots x_{i+1})$, 
	\item or $u x_1 \cdots x_i < u x_1 \cdots x_{i+1}$.
\end{itemize}
In particular, if $x_{i+1}$ is a reflection of $W$, then we are in the second situation, while the first situation can only occur if $x_{i+1}$ is not a reflection of $W$. 
\end{prop}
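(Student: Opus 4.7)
The plan is to prove the proposition by induction on $k$, the length of the $S_L^\theta$-reduced expression of $z$; the substantive content will be concentrated in the base case $k=1$, while the inductive step should propagate the claim along the expression.

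For the base case $k=1$, I would split on whether $x=x_1$ is a reflection of $W$. Recall that $x=w_0(J)$ for a $\theta$-orbit $J\subseteq L$, and $x\in T$ precisely when $|J|=1$, or $|J|=2$ with $J=\{s,t\}$ and $m=m(s,t)$ odd. In the reflection case, $u$ and $ux$ differ by an element of $T$ and are therefore Bruhat-comparable; minimality of $u$ in $uW_L^\theta$ gives $\ell(ux)\geq \ell(u)$, and since $x\neq e$ we conclude $u<ux$, i.e.\ the second situation. When $x$ is not a reflection, $|J|=2$ with $m$ even; here the plan is to decompose $u=u'v$ with $u'\in uW_J$ of minimal length and $v\in W_J$. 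Using $\ell(u'w)=\ell(u')+\ell_{W_J}(w)$ for $w\in W_J$ together with the dihedral identity $\ell_{W_J}(v\cdot w_0(J))=m-\ell_{W_J}(v)$, one finds
$$
\ell(ux)-\ell(u)=m-2\ell(v).
$$
Minimality of $u$ forces $\ell(v)\leq m/2$. Equality yields the first situation, while $\ell(v)<m/2$ gives $v<v\cdot w_0(J)$ in the Bruhat order of $W_J$ (which in the finite dihedral setting coincides with the length order), and this lifts to $u<ux$ in $W$ via the minimal coset representative property.

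For the inductive step $k\geq 2$, the first move is to apply the base case to $u\to ux_1$. If we are in the first situation at $i=0$ then $ux_1$ remains of minimal length in $ux_1W_L^\theta=uW_L^\theta$, and the induction hypothesis applied to $(ux_1,x_2\cdots x_k)$ closes the argument for the remaining steps. The ``In particular'' clause falls out of the same analysis: when $x_{i+1}$ is a reflection of $W$, the elements $u_i$ and $u_{i+1}$ differ by an element of $T$ and are Bruhat-comparable, so the first situation (equal lengths with $u_i\neq u_{i+1}$) is incompatible with $x_{i+1}\neq e$, leaving only the second situation.

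The main obstacle will be the inductive step when the base case yields the second situation at $i=0$: then $ux_1$ is strictly longer than $u$ and is no longer of minimal length in its coset, so the induction hypothesis cannot be invoked verbatim on $(ux_1, x_2\cdots x_k)$. Resolving this requires some additional input, for instance an auxiliary structural lemma identifying another minimal-length representative $u^\star\in uW_L^\theta$ that can be connected to $ux_1$ by a sequence of case-1 moves (reducing to the tractable scenario above), or a direct argument showing that once a case-2 step has occurred every subsequent step $u_i\to u_{i+1}$ still falls into case 1 or case 2. Either route rests on a finer understanding of how the $W$-length evolves under multiplication by $\theta$-orbit longest elements, and this is where I expect the principal difficulty of the proof to lie.
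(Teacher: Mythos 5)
Your base case $k=1$ is correct, and its two halves are in substance the paper's own ingredients for the first step: the reflection case is handled by minimality of $u$, and the dihedral computation $\ell(ux)-\ell(u)=m-2\ell(v)$ together with the fact that Bruhat order on a dihedral group is the length order is essentially Lemma~\ref{lem_long_gen}. But the proposition is a claim about \emph{every} index $i$, and your induction breaks down exactly where you say it does: after a case-2 step the element $u x_1\cdots x_i$ is no longer of minimal length in the coset, so neither the inductive hypothesis nor your base-case argument applies to it. Note also that the gap is not confined to the non-reflection case: for general $i$ with $x_{i+1}\in T$, Bruhat-comparability of $ux_1\cdots x_i$ and $ux_1\cdots x_{i+1}$ only rules out the equal-length alternative; it does not exclude $\ell(ux_1\cdots x_{i+1})<\ell(ux_1\cdots x_i)$, which you can only forbid at $i=0$ where minimality is available. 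So as written the proposal establishes the statement only for $i=0$ and for those indices reachable from $0$ by a chain of case-1 moves; the ``additional input'' you ask for at the end is precisely the substance of the proof.

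The paper resolves this without any induction, by pulling every obstruction back to $u$ itself. Setting $y=x_1\cdots x_i$, Proposition~\ref{prop_muhlh} gives $\ell(yx_{i+1})=\ell(y)+\ell(x_{i+1})$, hence $x_{i+1}\notin N(y)$, and in the dihedral case $y\in W^I$, so no reflection of $W_I$ lies in $N(y)$. The cocycle formula $N(uy)=N(y)\,\Delta\,(y^{-1}N(u)y)$ then converts any unwanted inversion of $uy$ into an inversion of $u$: in the reflection case one gets $yx_{i+1}y^{-1}\in N(u)$, and in the dihedral case one finds two commuting reflections $s,q\in W_I$ with $sq=x_{i+1}$, both in $N(uy)\setminus N(y)$, whose conjugates $\widetilde{s}=ysy^{-1}$, $\widetilde{q}=yqy^{-1}$ lie in $N(u)$ and satisfy $\widetilde{q}\in N(u\widetilde{s})$ by Lemma~\ref{lem_ref_commute}, whence $\ell(u\widetilde{s}\widetilde{q})<\ell(u)$. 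In either case the relevant product ($yx_{i+1}y^{-1}=\widetilde{s}\widetilde{q}$) lies in $W_L^\theta$ because $y$ and $x_{i+1}$ do, contradicting the minimality of $u$ in $uW_L^\theta$. This conjugation of inversion sets is the missing idea: it is what allows the hypothesis that $u$ is minimal to act at every index $i$, not only at $i=0$.
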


The analogue of Property~\ref{p2} is then given by the following statement:

\begin{theorem}[Elements of minimal length are minimal for the strong Bruhat order]\label{main_mm}
Let $x\in W$. There is an element $w\in W$ which is of minimal length in $x W_L^\theta$, and such that $w \leq x$. In other words, the elements of minimal length in any coset $x W_L^\theta$ are precisely those elements which are minimal with respect to the restriction of the strong Bruhat order $\leq$ on $W$ to $x W_L^\theta$. 
\end{theorem}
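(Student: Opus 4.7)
The second assertion of the theorem follows formally from the first: any element strictly below another in Bruhat order is strictly shorter, so a minimal-length element is automatically minimal for the restricted Bruhat order; conversely, if some $v \in xW_L^\theta$ were minimal for Bruhat but not of minimal length, applying the first assertion to $v$ (in place of $x$) would produce a strictly smaller element of the coset, a contradiction. I therefore focus on constructing a minimal-length $w \in xW_L^\theta$ with $w \leq x$.

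The strategy is induction on $\ell(x)$. The base case $\ell(x) = 0$ is trivial. In the inductive step, if $x$ is itself of minimal length in $xW_L^\theta$, take $w = x$. Otherwise, it suffices to exhibit some $x' \in xW_L^\theta$ with $x' < x$ in Bruhat order and $\ell(x') < \ell(x)$: applying the induction hypothesis to $x'$ produces a minimal-length element $w$ of $x' W_L^\theta = xW_L^\theta$ with $w \leq x' < x$, finishing the inductive step.

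To find such an $x'$, I let $u$ be any minimal-length element of $xW_L^\theta$, write $x = uz$ with $z = x_1 x_2 \cdots x_k$ an $S_L^\theta$-reduced expression, and set $u_i := ux_1\cdots x_i$. Proposition~\ref{prop_esc} asserts that each step $u_i \to u_{i+1}$ is either a \emph{lateral} step ($\ell(u_i) = \ell(u_{i+1})$) or a \emph{Bruhat ascent} ($u_i < u_{i+1}$). In the easy case where the last step is an ascent, $x' := u_{k-1}$ does the job: it lies in $xW_L^\theta$ (since $x_k \in W_L^\theta$), is strictly shorter than $x$, and satisfies $x' < x$.

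The principal obstacle is the case where the last step is \emph{lateral}, i.e. $\ell(u_{k-1}) = \ell(x)$. By the final sentence of Proposition~\ref{prop_esc}, this forces $x_k$ to be the longest element $w_0^J$ of a standard parabolic $W_J$ of $W_L$ indexed by a two-element $\theta$-orbit $J=\{s,t\}$ with $m(s,t)$ even. To reduce this case to the easy one, I plan to modify the data $(u,z)$: either by finding a different $S_L^\theta$-reduced expression of $z$ whose final letter is a reflection of $W$ (which, by the same proposition, forces the corresponding last step of the path to be a Bruhat ascent), or, when no such rewriting exists, by replacing $u$ with another minimal-length element $u'$ obtained from $u$ via the lateral moves of Theorem~\ref{thm_min}, so that the corresponding factorization $x = u'z'$ admits a reduced expression of $z'$ ending in an ascent. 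The key technical point is that the equality $\ell(u_{k-1} w_0^J) = \ell(u_{k-1})$ imposes strong rigidity on the factorization of $u_{k-1}$ relative to $W_J$: it places $u_{k-1}$ and $x$ in the same right $W_J$-coset with factorizations differing only by the central involution $w_0^J$ of $W_J$. This rigidity should provide enough leverage either to effect the desired rewriting, or to apply the inductive hypothesis directly to $u_{k-1}$ (which, although of the same length as $x$, lies in the same coset and has a shorter offset from $u$ in $W_L^\theta$), so that one extracts a minimal-length $w$ sitting below $x$ by exploiting the combinatorics of this dihedral exchange.
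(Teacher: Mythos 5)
Your reduction of the second assertion to the first is fine, and your treatment of the case where the last step $u_{k-1}\to u_{k-1}x_k$ is a Bruhat ascent is correct. But the case you yourself single out as the principal obstacle --- a lateral last step, $\ell(u_{k-1})=\ell(u_{k-1}x_k)$ --- is not resolved: you list three possible escape routes and carry out none of them. The first (rewrite the $S_L^\theta$-reduced expression of $z$ so that it ends in a reflection of $W$) is impossible in general: in the paper's $F_4$ example one has $S_L^\theta=\{s_1s_4,\,s_2s_3s_2s_3\}$, so $\Theta_T=\emptyset$ and \emph{no} letter of any reduced expression is a reflection. The second (replace $u$ by another minimal-length element via Theorem~\ref{thm_min}) faces the same problem, since those moves are themselves lateral and do not change which letters of $S_L^\theta$ are available. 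The third (induct on the offset $k$ rather than on $\ell(x)$, obtaining a minimal-length $w'\leq u_{k-1}$) is the route the paper actually takes, but it requires one further nontrivial step that your sketch omits entirely: given $w'\leq u_{k-1}$ with $\ell(u_{k-1})=\ell(u_{k-1}x_k)$, one must produce a minimal-length element of the coset lying below $u_{k-1}x_k$. This is exactly Lemma~\ref{lem_corr}, which shows that one of $w'$ or $w'x_k$ works and has length at most $\ell(w')$ (hence stays in $\mathcal{M}$); its proof is where the ``dihedral rigidity'' you allude to actually gets used, via the decomposition with respect to the rank-two parabolic $W_I$ and a case analysis on $\ell(u_I)$ relative to half the order of $W_I$. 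Without this lemma or an equivalent, the inductive step does not close.

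A secondary structural point: your outer induction on $\ell(x)$ forces you to find $x'<x$ in the coset with $\ell(x')<\ell(x)$, which is precisely what the lateral case obstructs. The paper instead inducts on the $S_L^\theta$-length $k$ of $u^{-1}x$, so the inductive hypothesis hands you a Bruhat-comparable minimal element one step back regardless of whether lengths drop, and Lemma~\ref{lem_corr} transports it across the lateral step. If you switch to that induction and supply a proof of Lemma~\ref{lem_corr}, your argument becomes the paper's.
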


\textbf{Acknowledgements.} We thank Pierre-Emmanuel Chaput and Lucas Fresse for a careful reading of the manuscript, and several useful comments and remarks. We thank Christophe Hohlweg for useful discussions. 

\section{Preliminaries and notation}

Let $(W,S)$ be a Coxeter system (with $S$ finite) with set of reflections $T=\bigcup_{w\in W} w S w^{-1}$, let $L\subseteq S$, and let $\theta$ be a diagram automorphism of $L$. It induces an automorphism of the standard parabolic subgroup $W_L$, which we still denote $\theta$. It is well-known that the subgroup $$W_L^\theta:=\{ w\in W_L \ | \ \theta(w)=w\}$$ of $\theta$-fixed elements of $W_L$ admits a structure of Coxeter group (see for instance~\cite{Hee_1, muhlh}). The generators as Coxeter group are given by the following set. Let $K\subseteq L$ be an orbit of the action of $\theta$. If $W_K$ is finite, let $w_0^K$ denote its longest element. The set of Coxeter generators of $W_L^\theta$ is given by the set of all such elements. We denote by $S_L^\theta$ the set of generators of $W_L^\theta$ as a Coxeter group. We denote by $\ell$ the classical length function on $W$. 

Note that the elements of $S_L^\theta$ are \textit{not} elements of $S$ in general. For instance, if $W$ has type $A_1 \times A_1$ with Coxeter generators $s$ and $t$, $L=S$ and $\theta$ exchanges $s$ and $t$, then $W_L^\theta=W^\theta$ has type $A_1$, with Coxeter generator $st=ts$. 

We will furthermore assume that $\theta$ satisfies $\theta^2=\mathrm{id}$. In this case, the elements of $S_L^\theta$ are longest elements of finite standard parabolic subgroups of $W$ of type $A_1$ or dihedral. In particular, if such elements have odd length, then they are reflections of $W$. It will be useful to distinguish the elements of $S_L^\theta$ depending on the parity of their length. We thus write $S_L^\theta=\Theta\overset{\cdot}{\cup} \Theta_T$, where \begin{align*}\Theta&:=\{ x\in S_L^\theta~|~\ell(x)~\text{is even}\}=\{ x\in S_L^\theta~|~x\notin T\},\\
\Theta_T&:=\{ x\in S_L^\theta~|~\ell(x)~\text{is odd}\}=\{ x\in S_L^\theta~|~x\in T\}.\end{align*}

For $u\in W$, we denote by $\mathrm{Min}(u)\subseteq W$ the set of elements of minimal length in $u W_L^\theta$, that is, the set $$\{ v\in u W_L^\theta~|~\ell(w)\geq \ell(v)~\text{for all}~w\in u W_L^\theta\}.$$
Let $$\mathcal{M}=\bigcup_{w\in W} \mathrm{Min}(w)$$ denote the set of elements which are of minimal length in their coset.   

\medskip

We have (see~\cite[Proposition 3.5]{muhlh})
\begin{prop}\label{prop_muhlh}
Let $x\in W_L^\theta$ and $x_1, x_2, \dots, x_k\in S_L^\theta$ such that $x_1 x_2 \cdots x_k$ is an $S_L^\theta$-reduced expression of $x$. Then $$\ell(x)=\sum_{i=1}^k \ell(x_i).$$
\end{prop}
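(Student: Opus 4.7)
My plan is to prove $\ell(x)=\sum_{i=1}^{k}\ell(x_i)$ by induction on $k$, the cases $k\le 1$ being immediate. For $k\ge 2$, set $y:=x_1\cdots x_{k-1}$. Since any prefix of a reduced $S_L^\theta$-expression is again $S_L^\theta$-reduced, the induction hypothesis gives $\ell(y)=\sum_{i=1}^{k-1}\ell(x_i)$, reducing the problem to showing $\ell(y x_k)=\ell(y)+\ell(x_k)$.

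Write $x_k=w_0^K$ for the $\theta$-orbit $K\subseteq L$ such that $W_K$ is finite. Since $\ell(y x_k)\le\ell(y)+\ell(x_k)$ is automatic, it suffices to prove the reverse. By the classical characterization of multiplication by a longest parabolic element, $\ell(y\cdot w_0^K)=\ell(y)+\ell(w_0^K)$ holds if and only if $\ell(ys)>\ell(y)$ for every $s\in K$, equivalently the inversion set of $y$ in $\Phi^+$ is disjoint from $\Phi_K^+$. I would proceed by contradiction and assume that some $s\in K$ is a right descent of $y$.

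Because $y\in W_L^\theta$ and $\theta$ acts equivariantly on $\Phi_L$ preserving positivity, $\theta(s)\in K$ is also a right descent of $y$. If $|K|=1$ this just recovers $s$, with $yx_k=ys$ and $\ell(yx_k)=\ell(y)-1$. If $|K|=2$, both generators of the dihedral group $W_K$ are right descents of $y$, so by a standard lemma on dihedral parabolic subgroups we get a factorization $y=y'\cdot w_0^K$ with $\ell(y)=\ell(y')+\ell(w_0^K)$, hence $yx_k=y'$ with $\ell(yx_k)=\ell(y)-\ell(x_k)$. In both cases $x=yx_k\in W_L^\theta$ and $\ell(x)<\sum_{i=1}^{k}\ell(x_i)$.

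The hard part is to upgrade this $W$-length drop into a strictly shorter $S_L^\theta$-expression for $x$, so as to contradict the $S_L^\theta$-reducedness of $x_1\cdots x_k$. My approach would rely on the inductive block decomposition of the inversion set, $N(y)=\bigsqcup_{i=1}^{k-1}(x_{k-1}\cdots x_{i+1})\,\Phi_{K_i}^+$, each block being $\theta$-stable. The hypothesis $N(y)\cap\Phi_K^+\ne\emptyset$ would pin the obstruction to a specific index $i$, and a case analysis comparing the orbits $K$ and $K_i$, combined with the Coxeter-type relations between distinct elements of $S_L^\theta$, would allow one to perform a braid move in the $S_L^\theta$-word that shortens it. This last step is the technical heart of the statement and is essentially the content of the H\'ee--M\"uhlherr construction of $(W_L^\theta,S_L^\theta)$ as a Coxeter system, which we would ultimately invoke.
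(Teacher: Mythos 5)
First, a point of comparison: the paper does not prove this proposition at all --- it is quoted directly from M\"uhlherr's \emph{Coxeter groups in Coxeter groups} (Proposition 3.5 there), so there is no internal argument to match your attempt against. Your proposal tries to supply an actual proof, and its first two thirds are sound and do follow the standard opening moves of the literature: the reduction by induction to $\ell(yx_k)=\ell(y)+\ell(x_k)$, the observation that $\theta$-equivariance makes the right descents of $y\in W_L^\theta$ in $K$ come in full $\theta$-orbits, and the resulting dichotomy that multiplying $y$ on the right by $w_0^K$ either adds $\ell(w_0^K)$ to the length or subtracts it (never anything in between). All of that is correct.

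The genuine gap is exactly where you flag it, and it is not a small one: having derived $\ell(yx_k)=\ell(y)-\ell(x_k)$, you must convert a drop in the \emph{$W$-length} into a shortening of the \emph{$S_L^\theta$-word}, i.e.\ produce an expression of $x$ in fewer than $k$ letters of $S_L^\theta$. This is an exchange-condition statement for the pair $(W_L^\theta,S_L^\theta)$ relative to the ambient length $\ell$, and it is precisely the technical content of the H\'ee--M\"uhlherr theorem --- that is, of the very result you are asked to prove. It does not follow formally from knowing that $(W_L^\theta,S_L^\theta)$ is an abstract Coxeter system, because the abstract exchange condition is stated for $\ell_{S_L^\theta}$-descents, not $\ell$-descents; the whole point of the proposition is to identify the two notions. (A symptom of the nontriviality: the proposition forces $\ell(a)=\ell(b)$ whenever two generators $a,b\in S_L^\theta$ satisfy an odd braid relation, since a braid move then changes the multiset of letters; so $\sum_i\ell(x_i)$ being constant on reduced words is a genuine constraint, not bookkeeping.) Your sketch of the final step --- a block decomposition of $N(y)$ followed by ``a braid move that shortens the word'' --- is a plausible outline of how such a proof goes, but as written it is not a proof, and the concluding sentence concedes that you would ``ultimately invoke'' the construction you are meant to establish. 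Either cite M\"uhlherr's Proposition 3.5 outright, as the paper does, or carry out the exchange argument in full; the intermediate position is circular.
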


We denote by $\leq$ the (strong) Bruhat order on $W$. Recall that it is defined as the transitive closure of the relation $x < xt$ whenever $x\in W$, $t\in T$, and $\ell(x)<\ell(xt)$. One has the following characterization, which we will use extensively in the next sections (see for instance~\cite[Corollary 2.2.3]{BB})

\begin{prop}
Let $u,v\in W$. The following are equivalent
\begin{enumerate}
	\item $u \leq v$, 
	\item There is a reduced expression of $v$ admitting a reduced expression of $u$ as a subword, 
	\item Every reduced expression of $v$ admits a reduced expression of $u$ as a subword. 
\end{enumerate}
\end{prop}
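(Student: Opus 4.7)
The plan is to prove the cyclic implications $(3) \Rightarrow (2) \Rightarrow (1) \Rightarrow (3)$, which together give the three-way equivalence.

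The implication $(3) \Rightarrow (2)$ is immediate since $v$ admits at least one reduced expression.

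For $(2) \Rightarrow (1)$, I would argue by induction on $\ell(v)$. Fix a reduced expression $s_1 s_2 \cdots s_k$ of $v$ containing a reduced expression of $u$ as a subword at positions $i_1 < i_2 < \cdots < i_m$. If $i_m < k$, then $v' := s_1 s_2 \cdots s_{k-1}$ is an element with $v' \lessdot v$ in Bruhat order whose fixed reduced expression still contains the reduced subword for $u$; by induction $u \leq v'$, hence $u \leq v$. If instead $i_m = k$, write $u = u' s_k$ reduced; then $u'$ is a reduced subword of $v' := s_1 \cdots s_{k-1}$, so $u' \leq v'$ by induction, and an application of the Lifting Property of the Bruhat order yields $u = u' s_k \leq v' s_k = v$.

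For $(1) \Rightarrow (3)$, the most delicate direction, I would argue by induction on $\ell(v) - \ell(u)$. The base case $\ell(v) = \ell(u)$ forces $u = v$ and is immediate. For the inductive step, since $u < v$ one can find a cover $u \leq u' \lessdot v$ in Bruhat order, so $u' = vt$ for some reflection $t \in T$ with $\ell(u') = \ell(v) - 1$. Given any reduced expression $s_1 s_2 \cdots s_k$ of $v$, the Strong Exchange Property provides an index $i$ such that $s_1 \cdots \widehat{s_i} \cdots s_k$ is a reduced expression of $u'$. By induction applied to $u \leq u'$, this expression admits a reduced expression of $u$ as a subword, and that subword also sits inside the original reduced expression $s_1 s_2 \cdots s_k$ of $v$, proving (3).

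The main obstacle is the implication $(1) \Rightarrow (3)$, whose inductive step relies on the Strong Exchange Property. This property, while standard in the theory of Coxeter groups, is itself nontrivial and usually established via the geometric reflection representation of $(W,S)$ and the associated root system; it is the technical heart of the argument, while the other two implications reduce to elementary manipulations of reduced words and a routine use of covers.
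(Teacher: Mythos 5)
First, a point of comparison: the paper does not prove this proposition at all --- it is the classical Subword Property of the Bruhat order, quoted directly from \cite[Corollary 2.2.3]{BB} --- so your proposal is being measured against the standard textbook argument rather than anything in the paper. Your architecture is indeed that standard argument, but the implication $(1)\Rightarrow(3)$ as written has a genuine gap. From the definition of $\leq$ as the transitive closure of $x<xt$ ($t\in T$, $\ell(x)<\ell(xt)$), a cover $u'\lessdot v$ only gives $u'=vt$ with $\ell(u')<\ell(v)$; your assertion that $\ell(u')=\ell(v)-1$ is the gradedness (chain property) of the Bruhat order, which is itself a nontrivial theorem whose standard proof is a \emph{consequence} of the Subword Property you are trying to establish. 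And without $\ell(u')=\ell(v)-1$, the word $s_1\cdots\widehat{s_i}\cdots s_k$ produced by the Strong Exchange Property has $k-1$ letters but need not be reduced, so your inductive step does not close. The standard repair is to take $t\in T$ with $u\leq vt<v$ directly from a defining chain for $u<v$ (no saturated chain needed), apply Strong Exchange to write $vt=s_1\cdots\widehat{s_i}\cdots s_k$, and then use the Deletion Property --- which belongs to the basic theory of reduced words and is available before any Bruhat-order facts --- to extract a reduced subword of this possibly non-reduced word; the induction on $\ell(v)-\ell(u)$ then finishes exactly as you describe.

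A secondary, related concern is the use of the Lifting Property in $(2)\Rightarrow(1)$: in \cite{BB} that property is Proposition 2.2.7 and is itself deduced from the Subword Property, so invoking it here is circular unless you supply an independent proof of the special case you need (from $u'\leq v'$ with $u's_k>u'$ and $v's_k>v'$ conclude $u's_k\leq v's_k$; this is essentially Deodhar's property $Z$, which can be proved directly from the definition, but not for free). Your closing remark correctly identifies Strong Exchange as a deep input, but the genuinely delicate points of this particular proposition are the two dependency issues above, not Strong Exchange itself.
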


Note that by subword we mean that some letters may not be consecutive in the bigger expression. 

Also recall that, given $J\subseteq S$, the subgroup $W_J$ of $W$ generated by $J$ is a Coxeter system with simple system $J$. Denoting $$W^J=\{w\in W \ \vert \ \ell(ws)>\ell(w)~\forall s\in J\},$$ every $w\in W$ admits a unique decomposition $w=w^J w_J$ with $w^J\in W^J$ and $w_J \in W_J$, and it satisfies $\ell(w)=\ell(w^J)+\ell(w_J)$. See for instance~\cite[Section 2.4]{BB}. In particular, every coset $x W_J$ admits a unique element of minimal length $x_0$, and for every $y\in x W_J$, one has $x_0 \leq y$. 

To each element $w\in W$, consider its set $N(w)$ of (right) inversions, which is a subset of $T$ defined by $$N(w)=\{t\in T \ \vert \ \ell(wt)<\ell(w)\}=\{t\in T \ \vert \ wt<w\}.$$

Recall that $|N(w)|=\ell(w)$ and that for all $x,y\in W$, we have $$N(xy)=N(y)\Delta (y^{-1} N(x) y),$$ where $\Delta$ denotes the symmetric difference.

\bigskip

We now prove three Lemmatas that will be useful in the proofs of the main results: 

\begin{lemma}\label{lem_corr}
For all $x\in S_L^\theta$ and $u, w\in W$ such that $u \leq w$ and $\ell(w)=\ell(wx)$, there is $v\in \{u, ux\}$ such that $\ell(v)\leq \ell(u)$ and $v \leq wx$. 
\end{lemma}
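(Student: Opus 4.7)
The plan is to first invoke a parity argument: since $\ell(wx) \equiv \ell(w) + \ell(x) \pmod 2$, the hypothesis $\ell(w) = \ell(wx)$ forces $\ell(x)$ to be even, so $x \in \Theta$. Hence $x = w_0^K$ is the longest element of a standard parabolic $W_K$ of dihedral type, with $K = \{s,t\}$ and $m := m(s,t)$ even. I would then work with the parabolic decompositions $w = w^K w_K$ and $u = u^K u_K$ with respect to $K$. Using $\ell(y\,w_0^K) = m - \ell(y)$ for $y \in W_K$, the hypothesis $\ell(w) = \ell(wx)$ becomes $\ell(w_K) = m/2$. In a dihedral group with $m$ even there are exactly two elements $a, b$ of length $m/2$ (the two length-$m/2$ alternating words), and a direct check gives $a\,x = b$ and $b\,x = a$. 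Up to relabelling $s \leftrightarrow t$, I may assume $w_K = a$, so that $w = w^K a$ and $wx = w^K b$ are reduced factorizations.

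Next I would establish the auxiliary fact that $u^K \le w^K$. Take a reduced expression of $w$ of the form $\rho\sigma$ with $\rho$ reduced for $w^K$ and $\sigma$ reduced for $w_K$. The subword characterization of Bruhat order gives a reduced expression $\tau$ of $u$ as a subword of $\rho\sigma$; its portion $\tau_\rho$ (the letters drawn from $\rho$) is a prefix of $\tau$, hence itself reduced, defining an element $u_0 \le w^K$, while $\tau_\sigma$ (the letters drawn from $\sigma$) is reduced and defines $u_1 \in W_K$ with $u = u_0 u_1$. Minimizing over the coset $uW_K = u_0 W_K$ then yields $u^K = u_0^K \le u_0 \le w^K$.

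The last step is a case split according to whether $u_K \le b$ holds in the Bruhat order of $W_K$. A direct subword check inside the dihedral group shows that every element of length strictly less than $m/2$ lies below both $a$ and $b$, while between the two length-$m/2$ elements only $b$ itself lies below $b$. If $u_K \le b$, two applications of the subword criterion (first in $W_K$, then crossing the $W^K$--$W_K$ boundary using $u^K \le w^K$) give $u = u^K u_K \le u^K b \le w^K b = wx$, so $v = u$ works. Otherwise $u_K = a$ or $\ell(u_K) > m/2$, both of which force $\ell(u_K) \ge m/2$; then $\ell(u_K x) = m - \ell(u_K) \le m/2$, the same dihedral subword check (using $ax = b$ in the case $u_K = a$) yields $u_K x \le b$, and one concludes $ux = u^K(u_K x) \le u^K b \le w^K b = wx$, while the length computation $\ell(ux) = \ell(u) + m - 2\ell(u_K) \le \ell(u)$ gives the required length bound, so $v = ux$ works. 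The subtle point is the correct choice of case split: splitting on $u_K \le b$, rather than on the numerical comparison of $\ell(u_K)$ with $m/2$, is what uniformly resolves the two boundary elements $a$ and $b$ of length $m/2$.
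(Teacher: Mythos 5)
Your proof is correct and follows essentially the same route as the paper's: reduce to the case where $x$ is the longest element of an even dihedral standard parabolic $W_K$, decompose $u$ and $w$ as $u^K u_K$ and $w^K w_K$, observe that $w_K$ must be one of the two length-$m/2$ elements of $W_K$, which are swapped by right multiplication by $x$, and case-split on the position of $u_K$ relative to these two elements. The only differences are cosmetic: you prove $u^K \le w^K$ by a direct subword argument where the paper cites \cite[Proposition 2.5.1]{BB}, and you organize the case split by whether $u_K \le b$ rather than by comparing $\ell(u_K)$ with $m/2$.
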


\begin{proof}
	Let $u,w\in W$ and $x\in S_L^\theta$ such that $u\leq w$ and $\ell(wx)=\ell(w)$. Since $\theta^2=\mathrm{id}$, this forces $x$ to be the longest element in a dihedral standard parabolic subgroup $W_I$ where $I=\{s,t\}$ and $I$ is of type $I_2(2k)$ for some $k\geq 1$; indeed, in all the other cases, $x$ has to be the longest element in a standard parabolic subgroup of type $A_1$ or $I_2(2k+1)$, hence it is a reflection, hence $\ell(wx)\neq \ell(w)$. There are exactly two distinct elements $w_1, w_2\in W_I$ of length $k$, and they satisfy $w_1^2=x=w_2^2$ if $k$ is even (in which case $w_1=w_2^{-1}$) and $w_1 w_2=x=w_2 w_1$ if $k$ is odd (in which case $w_1$ and $w_2$ are reflections). In all cases we have $w_1 x=w_2$ and $w_2 x=w_1$. We have $u^I \leq w^I$ because the map $x \mapsto x^I$ preserves the (strong) Bruhat order (see~\cite[Proposition 2.5.1]{BB}) . 
	
	The condition $\ell(w)=\ell(wx)$ yields $\ell(w_I x)=\ell(w_I)$, which forces $w_I$ to lie in $\{w_1, w_2\}$, say $w_I=w_1$ (the roles of $w_1$ and $w_2$ are symmetric). Consider the decomposition $u=u^I u_I$. If $\ell(u_I)>k$, then $\ell(u_I x) < k$ and hence  the unique reduced expression of $u_I x$ is a subword of the unique reduced expression of $w_2$. We thus have $ux = u^I u_Ix \leq w^I w_2$, but $w^I w_2= w^I w_1x=w^I w_I x=wx$. We thus have the result with $v=ux$, since we also have $$\ell(v)=\ell(ux)= \ell(u^I)+\ell(u_I x)< \ell(u^I)+k<\ell(u^I)+\ell(u_I)=\ell(u).$$  
If $\ell(u_I)<k$, then the unique reduced expression of $u_I$ is a subword of the unique reduced expression of $w_2$. We thus have $u=u^I u_I \leq w^I w_2$, but $w^I w_2= w^I w_1 x=w^I w_I x=wx$. We thus get the result with $v=u$. It remains to treat the case where $\ell(u_I)=k$, that is, where $u_I\in \{w_1, w_2\}$. If $u_I=w_1$, then $ux= u^I u_I x= u^I w_1 x \leq w^I w_1 x =wx$, hence we get the result with $v=ux$ (also using that $\ell(u_Ix)=\ell(u_I)$), while if $u_I=w_2$, we have that $u=u^I u_I=u^I w_2 \leq w^I w_2=w^I w_1x=wx$, hence we get the result with $v=u$.
\end{proof}

\begin{lemma}\label{lem_long_gen}
	Let $u\in W$ and $x\in S_L^\theta$. If $\ell(u)< \ell(ux)$, then $u<ux$.
\end{lemma}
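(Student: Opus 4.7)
The plan is to split on the structure of $x\in S_L^\theta$. By definition of $S_L^\theta$ (and because $\theta^2=\mathrm{id}$), there is an orbit $K\subseteq L$ of $\theta$ with $W_K$ finite, such that $x=w_0^K$ and either $K=\{s\}$ is a singleton (so $x=s$ is a simple reflection) or $K=\{s,t\}$ with $W_K$ dihedral of type $I_2(m)$ for some $m\geq 2$.

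In the singleton case, $x\in S$ and $\ell(ux)-\ell(u)=\pm 1$. The hypothesis $\ell(u)<\ell(ux)$ forces $\ell(ux)=\ell(u)+1$, which is the very definition of the covering relation $u\lessdot ux$ in the Bruhat order.

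For the dihedral case, I would use the standard parabolic decomposition $u=u^K u_K$ with $u^K\in W^K$ and $u_K\in W_K$, which satisfies $\ell(u)=\ell(u^K)+\ell(u_K)$. Since $u_K x\in W_K$ while $u^K\in W^K$, the unique $W^K\cdot W_K$ decomposition of $ux$ is $u^K\cdot (u_K x)$, and so $\ell(ux)=\ell(u^K)+\ell(u_K x)$. Hence $\ell(u)<\ell(ux)$ is equivalent to $\ell(u_K)<\ell(u_K x)$. Now $x=w_0^K$ is the longest element of $W_K$, so $\ell(u_K x)=m-\ell(u_K)$, and the inequality $\ell(u_K)<m-\ell(u_K)$ gives $\ell(u_K)<m/2<\ell(u_K x)$. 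In a dihedral group of type $I_2(m)$, the only Bruhat incomparabilities occur between elements of the same length; in particular any element of length strictly less than $m/2$ lies strictly below any element of length strictly greater than $m/2$. Therefore $u_K<u_K x$ in $W_K$, hence in $W$.

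Finally I would lift this comparison to $W$ via the subword characterization recalled in the preliminaries: take a reduced expression of $u_K x$ containing a reduced expression of $u_K$ as a subword, and prepend any reduced expression of $u^K$. Since lengths add in both decompositions, this yields a reduced expression of $ux$ that contains a reduced expression of $u$ as a subword, i.e.\ $u<ux$. The only slightly delicate step is the dihedral comparison, but it is standard and just reflects the well-known description of Bruhat order on $I_2(m)$.
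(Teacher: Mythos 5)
Your proof is correct and follows essentially the same route as the paper: split on whether the $\theta$-orbit is a singleton or a pair, use the parabolic decomposition $u=u^K u_K$ to reduce to a length comparison inside the dihedral group $W_K$, invoke the fact that in $I_2(m)$ length comparison implies Bruhat comparison, and lift back via concatenated reduced expressions. The paper's version is just slightly terser (it does not spell out the $\ell(u_Kx)=m-\ell(u_K)$ computation or the subword lifting), but the arguments coincide.
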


\begin{proof}
	We have $x=w_{0,I}$ for some $I\subseteq L$, where $w_{0,I}$ is the longest element in the finite standard parabolic subgroup $W_I$; since $\theta$ has order two, we have $|I|=1$ or $2$. If $|I|=1$, then $x\in S$ and we have $u < ux$. Hence we can assume that $|I|=2$, say $I=\{s,t\}$. Let $u=u^I u_I$ be the decomposition of $u$ in $W^I W_I$. We have $ux=u^I u_I w_{0,I}$ and $\ell(u)=\ell(u^I)+\ell(u_I)$, $\ell(ux)=\ell(u^I)+\ell(u_I w_{0,I})$. Hence, setting $u_I':=u_I w_{0,I}$, we deduce from the assumption that $\ell(u_I)<\ell(u_I')$. Since $u_I, u_I'\in W_I$ which is a dihedral Coxeter group, we get $u_I < u_I'$, hence $u=u^I u_I < u^I u_I'=ux$, which concludes the proof.
\end{proof}

\begin{lemma}\label{lem_ref_commute}
	Let $(W,S)$ be a Coxeter system and let $t, t'\in T$ with $t\neq t'$ and $tt'=t't$. Then $t\notin N(t')$. 
\end{lemma}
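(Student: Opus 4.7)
The plan is to argue in the standard geometric (Tits) realization of $(W,S)$ on the real vector space $V = \bigoplus_{s\in S}\mathbb{R}\alpha_s$, equipped with the symmetric bilinear form $B$ defined on simple roots by $B(\alpha_s,\alpha_{s'}) = -\cos(\pi/m(s,s'))$ (and $-1$ when $m(s,s') = \infty$). Two standard facts will be used throughout: the correspondence $r \mapsto \alpha_r$ is a bijection between the set $T$ of reflections of $W$ and the set $\Phi^+$ of positive roots; and one has the root-theoretic characterization $r \in N(w) \iff w(\alpha_r) \in \Phi^-$ of the right inversion set.

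Write $\alpha := \alpha_t$, $\alpha' := \alpha_{t'}$ and $c := B(\alpha,\alpha')$. Since $t \neq t'$ one has $\alpha \neq \alpha'$, and the fact that both roots are positive and $B$-normalized forces them to be linearly independent. The plane $U := \mathbb{R}\alpha + \mathbb{R}\alpha'$ is therefore 2-dimensional and stable under both $t$ and $t'$; in the basis $(\alpha,\alpha')$ the restrictions $t|_U$ and $t'|_U$ are given by explicit $2\times 2$ matrices depending only on $c$, via the reflection formula $r(v) = v - 2B(v,\alpha_r)\alpha_r$.

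The key and essentially only non-routine step is to deduce the orthogonality $c = 0$ from the commutation hypothesis $tt' = t't$, equivalently $(tt')^2 = \mathrm{id}_U$. This reduces to a short matrix computation: writing out $(tt')^2$ in the basis $(\alpha,\alpha')$, a single entry of the equation $(tt')^2 = \mathrm{id}_U$ already forces $c = 0$, so no lengthy calculation is required.

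Once orthogonality is in hand, the reflection formula gives $t'(\alpha) = \alpha - 2c\,\alpha' = \alpha \in \Phi^+$, so $t'$ fixes the positive root $\alpha$. The inversion criterion above then immediately yields $t \notin N(t')$, as desired. I note that the same argument goes through for arbitrary (not necessarily finite) Coxeter systems, which is what is needed here.
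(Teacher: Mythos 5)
Your argument is correct, but it is worth situating it against the paper, which actually gives \emph{two} proofs of this lemma. The paper's primary proof is purely combinatorial: it takes a palindromic $S$-reduced expression $s_1\cdots s_{k-1}s_ks_{k-1}\cdots s_1$ of $t'$, observes that any $t\in N(t')$ must be one of the "prefix" or "suffix" reflections of that expression, and derives in each case a strictly shorter expression for $t'$ (or for a conjugate), contradicting reducedness. Your proof instead works in the Tits representation, and in that respect it is essentially the alternative proof the authors sketch in the remark immediately following the lemma. The one place where your route genuinely diverges from that remark is the derivation of orthogonality: the remark avoids any matrix computation by noting that $t'(\alpha_t)=\pm\alpha_{t'tt'}=\pm\alpha_t$ (using $t'tt'=t$), so $t\in N(t')$ would make $\alpha_t$ a $(-1)$-eigenvector of $t'$ and hence proportional to $\alpha_{t'}$, which is the desired contradiction directly --- no need to first establish $B(\alpha_t,\alpha_{t'})=0$. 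Your matrix computation does work, but be precise about which entry you use: with $c=B(\alpha_t,\alpha_{t'})$ and the basis $(\alpha_t,\alpha_{t'})$ of $U$, the off-diagonal entries of $(tt')^2-\mathrm{id}_U$ only give $4c(2c^2-1)=0$, hence $c\in\{0,\pm 1/\sqrt{2}\}$, and the $(1,1)$ entry gives $c\in\{0,\pm\sqrt{3}/2\}$; it is the $(2,2)$ entry, equal to $1-4c^2$, that forces $c=0$ on its own. With that entry specified, the proof is complete and, as you note, valid for arbitrary Coxeter systems, matching the generality of the statement.
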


\begin{proof}
	Let $s_1 s_2 \cdots s_{k-1} s_k s_{k-1}\cdots s_2 s_1$ be a palindromic $S$-reduced expression of $t'$. Assume for contradiction that $t\in N(t')$. Then two cases can occur: either there is $1\leq i < k$ such that $t=s_1 s_2 \cdots s_{i-1} s_i s_{i-1}\cdots s_2 s_1$, or there is $1 \leq i < k$ such that $t= s_1 s_2 \cdots s_k s_{k-1} \cdots s_i s_{i+1} \cdots s_k s_{k-1} \cdots s_1$ (the case where $i=k$ yields $t=t'$).
	
	In the first case, we have $t'=tt't= s_1 s_2 \cdots s_{i-1} s_{i+1} \cdots s_k s_{k-1} \cdots s_{i+1} s_{i-1} \cdots s_1$, which is an expression for $t'$ in the elements of $S$ that is of strictly smaller length than $s_1 s_2\cdots s_k \cdots s_2 s_1$, a contradiction, since the latter was assumed to be $S$-reduced. 
	
	In the second case, we have $tt'= s_1 s_2 \cdots s_k s_{k-1} \cdots s_{i+1} s_{i-1} \cdots s_2 s_1$. But we also have $t't= s_1 \cdots s_{i-1} s_{i+1}\cdots s_k s_{k-1} \cdots s_2 s_1$. Since $tt'=t't$ we get $s_{i+1} \cdots s_k s_{k-1} \cdots s_i = s_i \cdots s_k s_{k-1}\cdots s_{i+1}$, yielding $s_{i} s_{i+1} \cdots s_k s_{k-1} \cdots s_{i+1} s_i=s_{i+1} \cdots s_k s_{k-1} \cdots s_{i+1}$, contradicting again the fact that $s_1 s_2 \cdots s_k s_{k-1} \cdots s_2 s_1$ is reduced.  
\end{proof}

\begin{rmq}
	Lemma~\ref{lem_ref_commute} can also be proven using root systems. Let $\Phi$ be the generalized root system attached to $(W,S)$. We have $\Phi=\Phi^+ \coprod (- \Phi^+)$, where $\Phi^+=\{\alpha_t~|~t\in T\}$ is the set of positive roots. In this setting, for $w\in W$ we have $$N(w)=\{t\in T~|~w(\alpha_t)\in(-\Phi^+)\}.$$ Let $t,t'$ satisfying the assumptions of Lemma~\ref{lem_ref_commute} and assume for contradiction that $t\in N(t')$. Then $t'(\alpha_t)\in(-\Phi^+)$. But $t'(\alpha_t)=\pm \alpha_{t'tt'}=\pm \alpha_{t}$, which forces $t'(\alpha_t)=-\alpha_t$. It follows that $\alpha_t$ is an eigenvector of $t'$ for the eigenvalue $-1$, hence it is proportional to $\alpha_{t'}$, yielding $\alpha_t=\alpha_{t'}$, a contradiction. 
\end{rmq}

\section{Proof of Theorem~\ref{thm_min}}

In this section, we prove Theorem~\ref{thm_min}. We keep the notation introduced in the previous section, recalling that we always assume that $\theta$ satisfies $\theta^2=\mathrm{id}$.

\medskip

We begin by proving Proposition~\ref{prop_esc}. 

\begin{proof}[{Proof of Proposition~\ref{prop_esc}}]
	
Let $i\in\{0, 1, \dots, k-1\}$. Set $y:=x_1 x_2 \cdots x_{i}$. We separate the proof into two cases depending on whether $x_{i+1}$ is a reflection or not. \medskip 

\textbf{$\bullet$~Case where $x_{i+1}\in \Theta_T$}. Since $x_{i+1}$ is a reflection, we want to show that $x_{i+1}\notin N(uy)$. Since $x_1 x_2\cdots x_{i+1}$ is $S_L^\theta$-reduced, we have that $x_1 x_2 \cdots x_i$ is also $S_L^\theta$-reduced, hence by Proposition~\ref{prop_muhlh} we have $$\ell(y x_{i+1})=\ell(x_1 x_2 \cdots x_{i+1})=\sum_{j=1}^{i+1} \ell(x_j)=\ell(x_1 \cdots x_i)+\ell(x_{i+1})=\ell(y)+\ell(x_{i+1}).$$ It follows that $x_{i+1} \notin N(y)$. Assume for contradiction that $x_{i+1}\in N(uy)$. Then since $N(uy)=N(y)\Delta (y^{-1} N(u) y)$, we have $x_{i+1}\in y^{-1} N(u) y$, hence $t:=y x_{i+1} y^{-1}\in N(u)$. Note that $t\in W_L^\theta$ since both $y$ and $x_{i+1}$ lie in $W_L^\theta$. But $t\in N(u)$ implies that $\ell(ut) < \ell(u)$, and since $ut\in u W_L^\theta$, this contradicts the fact that $u\in\mathcal{M}$.\medskip

\textbf{$\bullet$~Case where $x_{i+1}\in \Theta$}. Then $x_{i+1}$ is the longest element $w_{0,I}$ of a standard finite parabolic subgroup $W_I$, where $I=\{s,t\}$ is such that $W_I$ is of type $I_2(2m)$ for some $m\geq 1$. In particular $x_{i+1}$ has exactly two reduced expressions $(st)^m=(ts)^m$ in $W$. 

If $\ell(uy x_{i+1})>\ell(uy)$, then by Lemma~\ref{lem_long_gen} we have $uyx_{i+1} > uy$, which concludes the proof in that case. It therefore suffices to show that the case where $\ell(uy x_{i+1})< \ell(uy)$ leads to a contradiction. Hence assume that $\ell(uy x_{i+1})< \ell(uy)$. By Lemma~\ref{lem_long_gen} again, we have $uyx_{i+1} < uy$. Setting $x:=uy$, we consider the decomposition $x=x^I x_I$, where $x^I\in W^I$, $x_I\in W_I$, with respect to the standard dihedral parabolic subgroup $W_I$. Setting $v=x x_{i+1}$, since $x_{i+1}\in W_I$ we have $v^I=x^I$ and $v_I= x_I x_{i+1}$. We thus have $$\ell(x^I)+\ell(x_I)=\ell(x)>\ell(v)=\ell(v^I)+\ell(v_I)=\ell(x^I)+\ell(x_I x_{i+1}),$$ from what we deduce that $\ell(x_I) > \ell(x_I x_{i+1})$. Since $x_I$ and $x_I x_{i+1}$ have the same parity of length (because $x_{i+1}$ has even length), we must in fact have $\ell(x_I x_{i+1})\leq \ell(x_I)-2$. In particular, we have $x_I\neq 1$, and there is $r\in I=\{s,t\}$, say $r=s$ without loss of generality, such that $x_I s < x_I$. We thus have $\ell(x_Is)=\ell(x_I)-1$ since $s$ is a simple reflection and we deduce that  $$\ell(x_I x_{i+1}) < \ell(x_I s) < \ell(x_I).$$ Since $x_I x_{i+1}, x_I s$ and $x_I$ all lie in $W_I$ which is dihedral, we deduce that $$x_I x_{i+1} < x_I s < x_I.$$ This stays preserved when multiplying on the left by $x^I$, yielding $$v=x^I x_I x_{i+1} < x^I x_I s= x s < x^I x_I=x.$$ Note that $q:=s x_{i+1}\in T$ since $s x_{i+1}= s(ts)^{m}$. We thus have $v=xsq < xs < x$. Moreover, for length reasons, since $sq=qs$ we must have $xq < x$ (as $xq >x$ would contradict $xqs=xsq < x$ as $s$ is simple), hence both $s,q$ lie in $N(x)$. We now argue in a similar way as in the first case above to obtain a contradiction: since $x_1 x_2\cdots x_{i+1}=yx_{i+1}$ satisfies $\ell(y x_{i+1})=\sum_{j=1}^{i+1} \ell(x_j)$, then $y x_{i+1}$ has a reduced expression obtained by concatenating a reduced expression of $y$ and a reduced expression of $x_{i+1}=w_{0,I}$, hence $y\in W^I$. Since both $s$ and $q$ are reflections in $W_I$, we deduce that $s,q\notin N(y)$. As $x=uy$ and $s,q$ both lie in $N(x)$ but none of them lies in $N(y)$, using $N(x)=N(uy)=(y^{-1} N(u)y) \Delta N(y)$ we deduce that both $\widetilde{s}:=y s y^{-1}$ and $\widetilde{q}:=y q y^{-1}$ lie in $N(u)$. We thus have $u\widetilde{s} < u$, $u\widetilde{q} < u$. We have $\widetilde{q}\notin N(\widetilde{s})$ by Lemma~\ref{lem_ref_commute}, as $sq=qs$ implies that $\widetilde{s}\widetilde{q}=\widetilde{q}\widetilde{s}$. Since $N(u\widetilde{s})=(\widetilde{s} N(u)\widetilde{s}) \Delta N(\widetilde{s})$ and $\widetilde{q}\notin N(\widetilde{s})$, $\widetilde{q}\in N(u)$ (hence $\widetilde{q}=\widetilde{s}\widetilde{q}\widetilde{s}\in\widetilde{s} N(u) \widetilde{s}$), we get that $\widetilde{q}\in N(u\widetilde{s})$, hence $u \widetilde{s} \widetilde{q} < u \widetilde{s} < u$. But $\widetilde{s} \widetilde{q}=y sq y^{-1}=y x_{i+1} y^{-1}\in W_L^\theta$, hence $u \widetilde{s} \widetilde{q}\in u W_L^\theta$ with $\ell(u \widetilde{s}\widetilde{q}) <\ell(u)$, contradicting $u\in\mathrm{Min}(u)$.      
\end{proof}

We can now prove Theorem~\ref{thm_min}.

\begin{proof}[{Proof of Theorem~\ref{thm_min}}]
We apply Proposition~\ref{prop_esc} with $w=v$, $z=y$, and $x_i=y_i$ for all $i=1, \dots, k$, which is possible since $u\in\mathcal{M}$, $v\in u W_L^\theta$, and the expression $y_1 y_2 \cdots y_k$ is $S_L^\theta$-reduced. For all $i=0, \dots, k-1$, we thus get $\ell(u y_1 \cdots y_i)=\ell(u y_1 \cdots y_{i+1})$ or $u y_1 \cdots y_i<u y_1 \cdots y_{i+1}$, hence in all cases we have $\ell(u y_1 \cdots y_i)\leq \ell(u y_1 \cdots y_{i+1})$. We thus have $$\ell(u) \leq \ell(u y_1) \leq \dots \leq \ell(u y_1 \cdots y_i) \leq \dots \leq \ell(u y_1 \cdots y_{k-1}) \leq \ell(v).$$ But since $u, v\in\mathcal{M}\cap u W_L^\theta$, we have $\ell(u)=\ell(v)$, hence all inequalities in the above sequence are in fact equalities, which concludes the proof. \end{proof}

\begin{exple}
	
	Let $W = F_4$, $L=S=\{s_1,s_2,s_3,s_4\}$ and $\theta$ be the diagram automorphism of $L$ given by the following figure:
	
	\begin{center}
		\begin{tikzpicture}[main_node/.style={circle,fill=blue!50,minimum size=0.5em,inner sep=3pt]}]
			
			\node[main_node] (1) at (0,0) {};
			\node[main_node] (2) at (1.5, 0)  {};
			\node[main_node] (3) at (3, 0) {};
			\node[main_node] (4) at (4.5, 0) {};
			
			\node at (0,-0.4) {$s_1$} ;
			\node at (1.5,-0.4) {$s_2$} ;
			\node at (3,-0.4) {$s_3$} ;
			\node at (4.5,-0.4) {$s_4$} ;
			
			\node at (2.25,0.2) {$\tiny{~_4}$} ;
			
			\draw[line width=0.3mm] (1) to (2);
			\draw[line width=0.3mm] (2) to (3);
			\draw[line width=0.3mm] (3) to (4);
			
			\draw [line width=0.2mm,Stealth-Stealth ,red] (1) to [bend left=28] (4);
			\draw [line width=0.2mm,Stealth-Stealth, red] (2) to [bend left=-28] (3);
		\end{tikzpicture}
	\end{center}
	
	We have $S_L^\theta=\{s_1 s_4, s_2 s_3 s_2 s_3\}$ and $W_L^\theta$ is a dihedral group of order $16$. There are $72$ classes in $W / W_L^{\theta}$,  each of them having $16$ elements.  Let $X \in W / W_L^{\theta}$ and let $u \in X$.  By computational experimentations with the software SageMath we found that $$|\text{Min}(u)| \in \{1,2,3,4,5,6,8,16\}.$$
	\begin{flushleft}
		
	\end{flushleft}
	
	We now give two examples of such classes where we see how the minimal elements are related by the elements of $S_L^{\theta} = \{s_1s_4, s_2s_3s_2s_3\}$ when multiplying on the right, as an illustration of Theorem~\ref{thm_min}.  Write $x = s_1s_4$ and $y = s_2s_3s_2s_3$. For simplicity we will denote a reduced expression $s_{i_1} s_{i_2} \cdots s_{i_k}$ of an element of $W$ simply by $i_1 i_2 \cdots i_k$. 
	\medskip
	
	1) If $u = 42312342$ then the minimal elements of $X$ are given in Figure \ref{First Figure min F4}.
	\begin{figure}[h!]
		\begin{tikzpicture}
			\node at (0,0) (1) {$42312342$};
			\node at (3,0) (2) {$42312321$};
			\node at (6,0)(3) {$43123121$} ;
			\node at (9,0)(4) {$43123412$} ;
			
			\draw[line width=0.3mm, Stealth-Stealth] (1) to (2);
			\draw[line width=0.3mm,Stealth-Stealth]  (2) to (3);
			\draw[line width=0.3mm,Stealth-Stealth]  (3) to (4);
			
			\node at (1.5,0.25)  {$\textcolor{red}{x}$};
			\node at (4.5,0.25)  {$\textcolor{red}{y}$};
			\node at (7.5,0.25) {$\textcolor{red}{x}$} ;
			
		\end{tikzpicture}
		\caption{Minimal elements of a class having 4 minimal elements.}
		\label{First Figure min F4}
	\end{figure}

	2) If $u = 343231234312$ then the minimal elements of $X$ are given in Figure \ref{Second Figure min F4}.  Note that in this particular class, every element has minimal length in its coset. The conclusion of Theorem~\ref{thm_min} is thus trivially verified in this case.   
	\begin{figure}[h!]
		\scalebox{.9}{\begin{tikzpicture}
			\node at (0,0) (1) {$343231234312$};
			\node at (4,0) (2) {$432343123121$};
			\node at (8,0) (3) {$432342312321$} ;
			\node at (12,0)(4) {$234323123432$} ;
			
			\draw[line width=0.3mm, Stealth-Stealth] (1) to (2);
			\draw[line width=0.3mm,Stealth-Stealth]  (2) to (3);
			\draw[line width=0.3mm,Stealth-Stealth]  (3) to (4);
			
			\node at (2,0.25)  {$\textcolor{red}{x}$};
			\node at (6,0.25)  {$\textcolor{red}{y}$};
			\node at (10,0.25) {$\textcolor{red}{x}$} ;

			\node at (-1,-1.5) (5) {$343231234123$};
			\node at (-1,-3) (6) {$342312341231$};
			\node at (-1,-4.5) (7) {$342312342312$} ;
			\node at (-1,-6)(8) {$312343123121$} ;
			
			\draw[line width=0.3mm, Stealth-Stealth] (1) to (5);
			\draw[line width=0.3mm,Stealth-Stealth]  (5) to (6);
			\draw[line width=0.3mm,Stealth-Stealth]  (6) to (7);
			\draw[line width=0.3mm,Stealth-Stealth]  (7) to (8);
			
			\node at (-0.7,-0.7)  {$\textcolor{red}{y}$};
			\node at (-1.3,-2.25)  {$\textcolor{red}{x}$};
			\node at (-1.3,-3.75)  {$\textcolor{red}{y}$};
			\node at (-1.3,-5.25) {$\textcolor{red}{x}$} ;

			\node at (0,-7.5) (9) {$312342312321$};
			\node at (4,-7.5) (10) {$123423123432$};
			\node at (8,-7.5) (11) {$123423123423$} ;
			\node at (12,-7.5)(12) {$231234231231$} ;
			
			\draw[line width=0.3mm, Stealth-Stealth] (8) to (9);
			\draw[line width=0.3mm,Stealth-Stealth]  (9) to (10);
			\draw[line width=0.3mm,Stealth-Stealth]  (10) to (11);
			\draw[line width=0.3mm,Stealth-Stealth]  (11) to (12);
			
			\node at (-0.7,-6.8)  {$\textcolor{red}{y}$};
			\node at (2,-7.75)  {$\textcolor{red}{x}$};
			\node at (6,-7.75) {$\textcolor{red}{y}$} ;
			\node at (10,-7.75) {$\textcolor{red}{x}$} ;

			\node at (13,-1.5) (13) {$234323123423$};
			\node at (13,-3) (14) {$423123431231$};
			\node at (13,-4.5) (15) {$423123432312$} ;
			\node at (13,-6) (16) {$231234323121$} ;
			
			\draw[line width=0.3mm, Stealth-Stealth] (4) to (13);
			\draw[line width=0.3mm,Stealth-Stealth]  (13) to (14);
			\draw[line width=0.3mm,Stealth-Stealth]  (14) to (15);
			\draw[line width=0.3mm,Stealth-Stealth]  (15) to (16);
			\draw[line width=0.3mm,Stealth-Stealth]  (16) to (12);
			
			\node at (12.7,-0.7)  {$\textcolor{red}{y}$};
			\node at (13.3,-2.25)  {$\textcolor{red}{x}$};
			\node at (13.3,-3.75)  {$\textcolor{red}{y}$};
			\node at (13.3,-5.25) {$\textcolor{red}{x}$} ;
			\node at (12.7,-6.8) {$\textcolor{red}{y}$} ;

		\end{tikzpicture}}
		\caption{Minimal elements of a class having 16 minimal elements.}
		\label{Second Figure min F4}
	\end{figure}
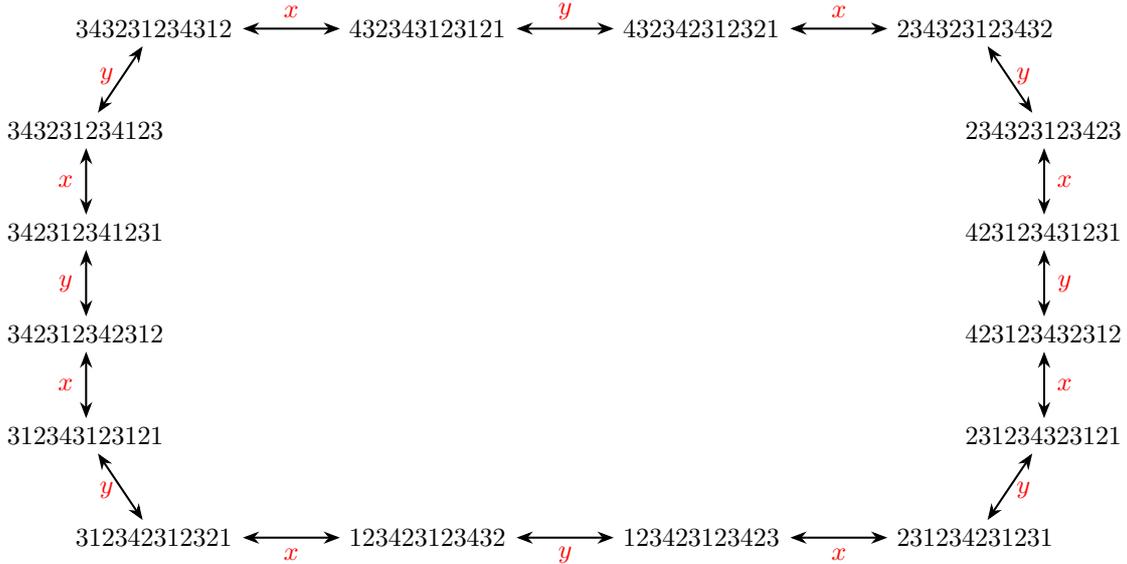
	
\end{exple}

\section{Proof of Theorem~\ref{main_mm}}

\begin{proof}[Proof of Theorem~\ref{main_mm}]
Let $u\in\mathcal{M}$. To show the result, it suffices to show the following: for all $k\geq 0$ and all $x_1, x_2, \dots, x_k\in S_L^\theta$ such that $x_1 x_2 \cdots x_k$ is $S_L^\theta$-reduced, there is $w\in \mathcal{M} \cap u W_L^\theta$ such that $w \leq u x_1 x_2 \cdots x_k$. Indeed, for every $x\in W$, it then suffices to choose $u\in \mathcal{M} \cap x W_L^\theta$, to choose an $S_L^\theta$-reduced expression $x_1 x_2\cdots x_k$ of $y:=u^{-1}x$ and apply the above result to $x=u x_1 x_2 \cdots x_k$ to find $w\in\mathcal{M} \cap x W_L^\theta$ such that $w \leq x$. 

The advantage of the above reformulation is that it allows one to argue by induction on $k$. For $k=0$ the result is trivial since one can take $w=u$. 

Hence assume that $k\geq 1$. By induction there is $w'\in \mathcal{M} \cap u W_L^\theta$ such that $w' \leq u x_1 x_2 \cdots x_{k-1}$. By Proposition~\ref{prop_esc}, the case where $\ell(ux_1 x_2 \cdots x_k)<\ell(u x_1 x_2 \cdots x_{k-1})$ cannot appear: we have either $u x_1 x_2 \cdots x_{k-1} < u x_1 x_2 \cdots x_k$, or $\ell(u x_1 x_2 \cdots x_{k-1})=\ell(u x_1 x_2 \cdots x_k)$. In the first case we are done with $w:=w'$, since $$w' \leq u x_1 x_2 \cdots x_{k-1}< u x_1 x_2 \cdots x_k.$$

Hence assume that $\ell(u x_1 x_2 \cdots x_{k-1})=\ell(u x_1 x_2 \cdots x_k)$. We have $w' \leq u x_1 x_2 \cdots x_{k-1}$ and $\ell(u x_1 x_2 \cdots x_{k-1})=\ell(u x_1 x_2 \cdots x_{k-1} x_k)$ with $x_k\in S_L^\theta$, hence by Lemma~\ref{lem_corr}, there is $w\in\{w', w' x_k\}$ such that $w \leq u x_1 x_2 \cdots x_{k-1} x_k$ and $\ell(w) \leq \ell(w')$. Since $w$ and $w'$ lie in the same coset modulo $W_L^\theta$ and $w'\in \mathcal{M}$, we must have $w\in \mathcal{M} \cap u W_L^\theta$, which concludes the proof. 
\end{proof}


\begin{thebibliography}{10}
	
	\bibitem{BB} A. Bj\"orner and F. Brenti, \emph{Combinatorics of Coxeter groups},
	Graduate Text in Math. Springer, 2005.
	
	\bibitem{CFG} P.-E.~Chaput, L.~Fresse, and T.~Gobet, \textsl{Parametrization, structure and Bruhat order of certain spherical quotients}, Represent. Theory {\bf 25} (2021), 935-974.
	
	\bibitem{Deodhar} V.V.~Deodhar, \textsl{A note on subgroups generated by reflections in Coxeter groups},
	Arch. Math. (Basel) {\bf 53} (1989), no. 6, 543–546.
	
	
	\bibitem{Dyer_subgroups} M.J.~Dyer, \textsl{Reflection subgroups of Coxeter systems}, J. of Algebra {\bf 135} (1990), Issue 1, 57-73.
	
	\bibitem{Dyer_Bruhat} M.J.~Dyer, \textsl{On the Bruhat graph of a Coxeter system}, Compositio Math. {\bf 78} (1991), no. 2, 185-191.


	\bibitem{Hee_1} J.-Y. Hée, \textsl{Systèmes de racines sur un anneau commutatif totalement ordonné}, Geom. Dedicata {\bf 37} (1991), 65-102.

	
	\bibitem{Lus_book} G.~Lusztig, \textsl{Hecke algebras with unequal parameters}, CRM Monographs Ser. {\bf 18}, AMS, Providence, RI, 2003. 
	
	\bibitem{muhlh} B.~M\"uhlherr, \textsl{Coxeter groups in Coxeter groups}, Finite geometry and combinatorics (Deinze, 1992), 277–287, London Math. Soc. Lecture Note Ser., 191, Cambridge Univ. Press, Cambridge, 1993. 
	
	\bibitem{Steinberg} R.~Steinberg, \textsl{Lectures on Chevalley groups}, Univ. Lecture Ser., 66, American Mathematical Society, Providence, RI, 2016, xi+160 pp. 
	
\end{thebibliography}
\end{document}